\tikzset{black node/.style={draw, circle, fill = black, minimum size = 5pt, inner sep = 0pt}}
\tikzset{normal/.style = {draw=none, fill = none, minimum size =0, rectangle}}
\newtheorem*{rep@theorem}{\rep@title}
\newcommand{\newreptheorem}[2]{%
\newenvironment{rep#1}[1]{%
 \def\rep@title{#2 \ref{##1}}%
 \begin{rep@theorem}}%
 {\end{rep@theorem}}}
\newtheorem{theorem}{Theorem}
\newaliascnt{lemma}{theorem}
\newaliascnt{observation}{theorem}
\newaliascnt{corollary}{theorem}
\newaliascnt{conjecture}{theorem}
\newaliascnt{claim}{theorem}
\theoremstyle{definition}
\newcommand{\N}{\mathbb{N}}
\DeclareMathOperator{\OPT}{\mathrm{OPT}}
\newcommand{\svd}{\textsc{SVD}}
\begin{document}

\title[A simple $(2+\epsilon)$-approximation algorithm for Split Vertex Deletion]{A simple $(2+\epsilon)$-approximation algorithm for Split Vertex Deletion}

\author[M.~Drescher]{Matthew Drescher}
\author[S.~Fiorini]{Samuel Fiorini}
\author[T.~Huynh]{Tony Huynh}
\address[M.~Drescher, S.~Fiorini]{\newline D\'epartement de Math\'ematique
\newline Universit\'e libre de Bruxelles
\newline Brussels, Belgium}
\email{knavely@gmail.com, sfiorini@ulb.ac.be}
\address[T.~Huynh]{\newline School of Mathematics
\newline Monash University
\newline Melbourne, Australia}
\email{tony.bourbaki@gmail.com}

\thanks{Tony Huynh is supported by the Australian Research Council.}

\date{\today}
\sloppy

\begin{abstract}
A  \emph{split graph} is a graph whose vertex set can be partitioned into a clique and a stable set.  Given a graph $G$ and weight function $w: V(G) \to \mathbb{Q}_{\geq 0}$, the \textsc{Split Vertex Deletion} (\svd) problem asks to find a minimum weight set of vertices $X$ such that $G-X$ is a split graph. It is easy to show that a graph is a split graph if and only it it does not contain a $4$-cycle, $5$-cycle, or a two edge matching as an induced subgraph.  Therefore, \svd{} admits an easy $5$-approximation algorithm.  On the other hand, for every $\delta >0$, \svd{} does not admit a $(2-\delta)$-approximation algorithm, unless P=NP or the Unique Games Conjecture fails.  

For every $\epsilon >0$, Lokshtanov, Misra, Panolan, Philip, and Saurabh~\cite{lokshtanov20202+} recently gave a \emph{randomized} $(2+\epsilon)$-approximation algorithm for \svd{}.  In this work we give an extremely simple deterministic $(2+\epsilon)$-approximation algorithm for \svd{}.  
\end{abstract}

\maketitle

A graph $G$ is a \emph{split graph} if $V(G)$ can be partitioned into two sets $K$ and $S$ such that $K$ is a clique and $S$ is a stable set. Split graphs are an important subclass of perfect graphs which feature prominently in the proof of the Strong Perfect Graph Theorem by Chudnovsky, Robertson, Seymour, and Thomas~\cite{CRST06}.  

 Given a graph $G$ and weight function $w: V(G) \to \mathbb{Q}_{\geq 0}$, the \textsc{Split Vertex Deletion} (\svd) problem asks to find a set of vertices $X$ such that $G-X$ is a split graph and $w(X):=\sum_{x \in X} w(x)$ is minimum.   A subset $X \subseteq V(G)$ such that $G-X$ is a split graph is called a \emph{hitting set}. We denote by $\OPT(G,w)$ the minimum weight of a hitting set. 
 
 It is easy to show $G$ is a split graph if and only if $G$ does not contain $C_4, C_5$ or $2K_2$ as an induced subgraph, where $C_\ell$ denotes a cycle of length $\ell$ and $2K_2$ is a matching with two edges. Therefore, the following is an easy $5$-approximation algorithm\footnote{An \emph{$\alpha$-approximation algorithm} for \svd{} is a (deterministic) polynomial-time algorithm computing a hitting set $X$ with $w(X) \leqslant \alpha \cdot \OPT(G,w)$.} for \svd{} in the unweighted case (the general case follows from the \emph{local ratio method}~\cite{bbfr2004}). If $G$ is a split graph, then $\varnothing$ is a hitting set, and we are done.  Otherwise, we find an induced subgraph $H$ of $G$ such that $H \in \{C_4, C_5, 2K_2\}$. We put $V(H)$ into the hitting set, replace $G$ by $G-V(H)$, and recurse. 
 
 On the other hand, there is a simple approximation preserving reduction from \textsc{Vertex Cover} to \svd{} (see~\cite{lokshtanov20202+}).  Therefore, for every $\delta >0$, \svd{} does not admit a $(2-\delta)$-approximation algorithm, unless P=NP or the Unique Games Conjecture fails~\cite{KR08}.  
 
 For every $\epsilon >0$, Lokshtanov, Misra, Panolan, Philip, and Saurabh~\cite{lokshtanov20202+} recently gave a \emph{randomized} $(2+\epsilon)$-approximation algorithm for \svd{}.  Their approach is based on the randomized $2$-approximation algorithm for feedback vertex set in tournaments~\cite{LMMPPS20}, but is more complicated and requires several new ideas and insights. 
 
 Here we give a much simpler deterministic $(2+\epsilon)$-approximation algorithm for \svd{}.  
 
 \begin{theorem} \label{main}
 For every $\epsilon >0$, there is a (deterministic) $(2+\epsilon)$-approximation algorithm for \svd{}.
 \end{theorem}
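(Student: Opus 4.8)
The plan is to reduce the weighted problem to the cardinality (unweighted) case via the local ratio method, as indicated in the excerpt, and then to exploit the tripartition structure of split graphs. Writing a hitting set as the deleted part $X$ of a partition $V(G)=K\sqcup S\sqcup X$ in which $K$ is a clique and $S$ is a stable set, the crucial observation is that \emph{once the clique side $K$ is fixed, the residual problem is exactly} \VC: the vertices of $V(G)\setminus K$ must be deleted down to a stable set, so the optimal choice of $X$ is a minimum vertex cover of $G-K$. Symmetrically, fixing the stable side $S$ turns the residual problem into \VC{} on the complement $\overline{G-S}$ (here I use that split graphs are closed under complementation). Since \VC{} admits a $2$-approximation, each residual problem is solvable within a factor of $2$.

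This already yields factor $2$ whenever one side of the optimal partition is small. Fix a threshold $t=t(\eps)$. If some optimal solution has $|K^\ast|\le t$, I would enumerate all cliques $Q$ of $G$ with $|Q|\le t$ — at most $n^{t}$ of them, hence polynomially many for fixed $\eps$ — and for each candidate $Q$ compute a $2$-approximate minimum vertex cover $X_Q$ of $G-Q$. Then $G-X_Q$ is split, with clique $Q$ and stable set $(V(G)\setminus Q)\setminus X_Q$. Taking $Q=K^\ast$ shows that the best hitting set $X_Q$ found this way has weight at most $2\,\OPT(G,w)$, because $X^\ast$ is itself a vertex cover of $G-K^\ast$. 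The complementary enumeration over stable sets of size at most $t$ handles the case $|S^\ast|\le t$ in the same way.

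The main obstacle is the regime in which \emph{both} sides of every optimal partition are large, i.e.\ $|K^\ast|,|S^\ast|>t$, where neither side can be guessed by enumeration. That this regime is genuine is already visible on a large clique joined arbitrarily to a large stable set together with one extra vertex forming an induced $2K_2$: the optimum deletes a single vertex, yet both sides are arbitrarily large, and a naive per-obstruction charging loses a factor of $4$. One also cannot separate clique from stable by degree, since a stable vertex adjacent to the whole clique has large degree too. To cope with this I would still guess a size-$t$ clique core $Q\subseteq K^\ast$ and a size-$t$ stable core $R\subseteq S^\ast$. Any vertex non-adjacent to some vertex of $Q$ cannot lie in the final clique, and any vertex adjacent to some vertex of $R$ cannot lie in the final stable set; hence a vertex that is simultaneously non-adjacent to some vertex of $Q$ and adjacent to some vertex of $R$ is forced into $X^\ast$ and may be deleted outright at cost at most $\OPT(G,w)$. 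What remains is a ``core'' of undecided vertices — those adjacent to all of $Q$ and non-adjacent to all of $R$ — on which the clique/stable choice is still free.

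Reducing this undecided core to a $2$-approximable problem is the crux, and where I expect the real work to lie. The hope is that the threshold $t\sim 1/\eps$ can be chosen so that the loss incurred on the core is at most $\eps\cdot\OPT(G,w)$, giving the claimed factor $2+\eps$. Concretely, I would aim to prove an \EP{}-type packing/covering statement for the induced $C_4,C_5,2K_2$ obstructions restricted to the core: either it contains many vertex-disjoint obstructions, certifying that $\OPT$ is large enough to absorb an additive error, or they can be covered by a small set whose removal returns us to the already-solved small-side case. Establishing such a bound with the right constants — so that the packing number controls the covering number up to a $(1+\eps)$ factor rather than the trivial factor given by the obstruction size — is the step I expect to be hardest, and is presumably what the paper's Erdős--Pósa machinery is designed to deliver.
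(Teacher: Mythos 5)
Your opening observations are sound and match the paper's starting point: once one side of the split partition is fixed, the residual problem is \VC{} (on $G$ minus the clique, or on the complement minus the stable set), and a $2$-approximation for \VC{} then controls that piece; moreover $X^\ast$ itself is a vertex cover of $G-K^\ast$ and of $\overline{G}-S^\ast$, so the factor $2$ is preserved. The enumeration over small cliques or small stable sets correctly disposes of the case where one side of the optimal partition has size at most $t$. But you have correctly identified, and not closed, the real gap: the case where both $K^\ast$ and $S^\ast$ are large. Guessing cores $Q\subseteq K^\ast$ and $R\subseteq S^\ast$ and deleting the vertices forced into $X^\ast$ still leaves an undecided core on which nothing has been gained, and the \EP-type packing/covering statement you hope for is not established anywhere in your argument (nor in the paper --- the paper does not use \EP{} machinery at all). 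A covering-versus-packing bound for induced $C_4,C_5,2K_2$ with ratio $1+\eps$ would be a strong new combinatorial theorem, and even granted it, the ``many disjoint obstructions'' branch does not obviously yield an algorithm. As written, the proof is incomplete.

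What actually closes this case in the paper is a combination of two ideas you are missing. First, the local ratio method is used not only to reduce weights but to make the graph structurally simple: since every hitting set of $P_k$ has size at least $(k-4)/2$ (the largest clique of $P_k$ has two vertices), deleting all of $V(P_k)$ whenever an induced $P_k$ is found is a $\frac{2k}{k-4}\le 2+\eps$ local step, so one may assume $G$ is $P_k$-free, and by self-complementarity of split graphs also $\overline{P_k}$-free. This is where the $\eps$ is spent --- not on the core. Second, for $\{P_k,\overline{P_k}\}$-free graphs one invokes Theorem~\ref{separator} (Bousquet, Lagoutte and Thomass\'e): there is a polynomial-size, polynomial-time computable family of cuts $(A,B)$ such that some cut satisfies $K^\ast\subseteq A$ and $S^\ast\subseteq B$. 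This family plays exactly the role your enumeration of small cliques and stable sets was meant to play, but it works even when both sides are large; for each cut one runs the \VC{} $2$-approximation on $\overline{G}[A]$ and on $G[B]$ and returns the best union. Without an ingredient of this strength --- some polynomially enumerable family guaranteed to separate $(K^\ast,S^\ast)$ --- your approach stalls precisely where you said it would.
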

 
 As far as we can tell, the easy $5$-approximation described above was the previously best (deterministic) approximation algorithm for \svd{}.
 Before describing our algorithm and proving its correctness, we need a few definitions. 
 
 Let $G$ be a graph and $\mathcal H$ be a family of graphs.  We say that $G$ is \emph{$\mathcal H$-free} if $G$ does not contain $H$ as an induced subgraph for all $H \in \mathcal H$. We let $\overline{G}$ be the complement of $G$. A \emph{cut} in a graph $G$ is a pair $(A,B)$ such that $A \cup B = V(G)$ and $A \cap B = \varnothing$. The cut $(A,B)$ is said to \emph{separate} a pair $(K,S)$ where $K$ is a clique, and $S$ a stable set if $K \subseteq A$ and $S \subseteq B$. A family of cuts $\mathcal{F}$ is called a \emph{clique-stable set separator} if for all pairs $(K, S)$ where $K$ is a clique and $S$ is a stable set disjoint from $K$, there exists a cut $(A,B)$ in $\mathcal{F}$ such that $(A,B)$ separates $(K,S)$.  For each $k \in \N$, let $P_k$ be the path on $k$ vertices. 
 
 The main technical ingredient we require is the following theorem of Bousquet, Lagoutte and Thomass\'e \cite{BOUSQUET201473}
 
 \begin{theorem} \label{separator}
 For every $k \in \N$, there exists $c(k) \in \N$ such that every $n$-vertex, $\{P_k,\overline{P_k}\}$-free graph has a clique-stable set separator of size at most $n^{c(k)}$.  Moreover, such a clique-stable set separator can be found in polynomial time.\footnote{We remark that~\cite{BOUSQUET201473} do not state that the clique-stable set separator can be found in polynomial time, but this is easy to check, where the relevant lemmas appear in \cite[Theorem 4]{BLA15}, ~\cite[Theorem 1.1]{FS08}, and~\cite[Lemma 1.5]{EH89}.  Note that the abstract of ~\cite{BOUSQUET201473} states that $c(k)$ is a tower function. However, the bound for $c(k)$ can be significantly improved by using ~\cite[Theorem 1.1]{FS08} instead of a lemma of R\"{o}dl~\cite{Rodl86} (which was used in an older version of~\cite{BLA15}).  The proof of~\cite[Theorem 1.1]{FS08} does not use the Szemerédi Regularity Lemma~\cite{szemeredi78}, and provides much better quantitative estimates.}  
 \end{theorem}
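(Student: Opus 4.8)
The plan is to deduce the theorem from two essentially independent ingredients: a \emph{strong Erdős--Hajnal} structure theorem for the class — every graph in it contains a \emph{linear-sized homogeneous pair} — together with a divide-and-conquer scheme that converts such pairs into a polynomial clique-stable set separator (\emph{CS-separator} for short). Call a pair of disjoint vertex sets $(A,B)$ a \emph{homogeneous pair} if $A$ is either complete or anticomplete to $B$. I would first reduce everything to the following claim: there is $\delta = \delta(k) > 0$ such that every $\{P_k,\overline{P_k}\}$-free graph $G$ on $n \geq 2$ vertices contains, and one can find in polynomial time, a homogeneous pair $(A,B)$ with $|A|,|B| \geq \delta n$. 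Since $\{P_k,\overline{P_k}\}$-freeness is preserved under taking induced subgraphs, and the class is self-complementary (so the complete and anticomplete cases are interchanged by passing to the complement, which also swaps cliques and stable sets), this claim can be applied recursively to smaller subinstances.

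Granting the claim, the recursion is elementary and is the part I would write out first. Let $f(n)$ be the maximum, over $n$-vertex graphs in the class, of the size of a smallest CS-separator (taken non-decreasing); I claim $f(n) \le 2f((1-\delta)n)$. Fix $G$ and a homogeneous pair $(A,B)$ with $A$ complete to $B$ (the anticomplete case is symmetric, interchanging cliques and stable sets). The key observation is that since $A$ is complete to $B$, every stable set $S$ satisfies $S \cap A = \varnothing$ or $S \cap B = \varnothing$. Recursively compute a CS-separator $\mathcal F_A$ of $G-A$ and a CS-separator $\mathcal F_B$ of $G-B$, and set
\[
\mathcal F = \{(A' \cup A,\ B') : (A',B') \in \mathcal F_A\} \cup \{(A' \cup B,\ B') : (A',B') \in \mathcal F_B\}.
\]
Given a clique-stable pair $(K,S)$ with, say, $S \cap A = \varnothing$, the pair $(K \setminus A, S)$ is a clique-stable pair of $G-A$, so some $(A',B') \in \mathcal F_A$ separates it; then $(A'\cup A, B')$ separates $(K,S)$, since $K \subseteq A' \cup A$ and $S \subseteq B'$. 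The case $S \cap B = \varnothing$ is handled symmetrically by $\mathcal F_B$, so $\mathcal F$ is a valid CS-separator of $G$ of size $|\mathcal F_A| + |\mathcal F_B| \le 2f((1-\delta)n)$. Unrolling $f(n) \le 2f((1-\delta)n)$ against a constant base case yields $f(n) = O(n^{c(k)})$ with $c(k) = \log_{1/(1-\delta)} 2$. Moreover the recursion tree has branching factor $2$ and depth $O(\log n)$, hence polynomially many nodes, and at each node we need only locate one homogeneous pair; so the separator is produced in polynomial time.

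The real content — and the step I expect to be the main obstacle — is the homogeneous-pair claim itself. I would obtain it in two moves. First, apply a Rödl-type density theorem to the $P_k$-free graph $G$, using the quantitatively much stronger, regularity-free \cite[Theorem 1.1]{FS08} in place of Rödl's lemma, to extract a linear-sized object that is \emph{$\epsilon$-homogeneous}, i.e.\ whose relevant (bipartite) edge density lies below $\epsilon$ or above $1-\epsilon$. Second — and this is where $\{P_k,\overline{P_k}\}$-freeness is genuinely used — one must \emph{clean} this almost-homogeneous object into a perfectly complete-or-anticomplete pair while losing only a constant factor in size; the absence of long induced paths and their complements is precisely what controls how the few stray (non-)edges can be distributed, and this is where \cite[Lemma 1.5]{EH89} and the structural argument of \cite[Theorem 4]{BLA15} enter. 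The delicate point throughout is quantitative: one must track the interplay between $\epsilon$ and $\delta$ across both moves so that $\delta$ depends only on $k$, which is exactly what pins down the exponent $c(k)$ and, through \cite[Theorem 1.1]{FS08}, replaces the tower-type bound by a genuinely elementary one.
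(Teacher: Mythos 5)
The paper does not actually prove this theorem --- it is imported verbatim from Bousquet--Lagoutte--Thomass\'e \cite{BOUSQUET201473} --- and your proposal correctly reconstructs the argument of that source: the divide-and-conquer step turning linear-sized complete/anticomplete pairs into a polynomial clique-stable set separator (the observation that a stable set cannot meet both sides of a complete pair, the recurrence $f(n)\le 2f((1-\delta)n)$, the exponent $c(k)=\log_{1/(1-\delta)}2$, and the polynomial bound on the recursion tree) is complete and correct as you wrote it. The remaining ingredient, the strong Erd\H{o}s--Hajnal property of $\{P_k,\overline{P_k}\}$-free graphs, you only sketch rather than prove, but you attribute it to precisely the results the paper's footnote points to (\cite[Theorem 4]{BLA15}, via \cite[Theorem 1.1]{FS08} and \cite[Lemma 1.5]{EH89}), so your route coincides with the one the paper delegates to.
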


 We are now ready to state and prove the correctness of our algorithm.
 
 \begin{proof}[Proof of Theorem~\ref{main}]
 Let $G$ be an $n$-vertex graph, $w: V(G) \to \mathbb{Q}_{\geq 0}$, and $\epsilon >0$.  We may assume that $w(v)>0$ for all $v \in V(G)$, since we may delete vertices of weight $0$ for free.  Choose $k$ sufficiently large so that $\frac{2k}{k-4} \leq 2+\epsilon$. Let $\mathbf{1}$ be the weight function on $V(P_k)$ which is identically $1$.   Since the largest clique of $P_k$ has size $2$ and every vertex cover of $P_k$ has size at least $\lfloor k / 2 \rfloor$, every hitting set of $P_k$ has size at least $\frac{k-4}{2}$.  Therefore, $|V(P_k)| / \OPT(P_k, \mathbf{1}) \leq 2+\epsilon$, and so by the local ratio method~\cite{bbfr2004}, we may assume that $G$ is $P_k$-free.  Note that $G$ is a split graph if and only if $\overline{G}$ is a split graph.  Thus, we may also assume that $G$ is $\overline{P_k}$-free.  Now, by Theorem~\ref{separator}, there exists a constant $c(k)$ such that $G$ has a clique-stable set separator $\mathcal F$ such that $|\mathcal F| \leq n^{c(k)}$.  
 
 For each $(A,B) \in \mathcal F$, let $\rho_A$ and $\rho_B$ be the weights of the minimum vertex covers of $(\overline{G}[A], w)$ and $(G[B], w)$.  Since there is a $2$-approximation algorithm for vertex cover, for each $(A,B) \in \mathcal F$, we can find vertex covers $X_A$ and $X_B$ of $(\overline{G}[A], w)$ and $(G[B], w)$ such that $w(X_A) \leq 2\rho_A$ and $w(X_B) \leq 2\rho_B$.  Let $X^*$ be a minimum weight hitting set for $(G, w)$, and suppose that $V(G-X^*)$ is partitioned into a clique $K^*$ and a stable set $S^*$.  Since $\mathcal F$ is a clique-stable set separator, there must be some $(A^*,B^*) \in \mathcal F$ such that $K^* \subseteq A^*$ and $S^* \subseteq B^*$.  Therefore, if we choose $(A,B) \in \mathcal F$ such that $w(X_A)+w(X_B)$ is minimum, then $X_A \cup X_B$ is a hitting set such that $w(X_A \cup X_B) \leq 2w(X^*)$.  Finally, since $|\mathcal F| \leq n^{c(k)}$, our algorithm clearly runs in polynomial time.   
 \end{proof}
 
\bibliographystyle{abbrv}
\bibliography{references}

\end{document}